\documentclass[10pt]{amsart}						
\usepackage{amsmath}
\usepackage{amsfonts}
\usepackage{amssymb}
	\usepackage[pagebackref,  pdfpagemode=FullScreen,  colorlinks=true]{hyperref}	
	
	\usepackage{verbatim}
	\usepackage{mathrsfs, amssymb}

	\setlength{\textwidth}{6.1in}
	\setlength{\oddsidemargin}{0.2in}
	\setlength{\evensidemargin}{0.2in}

	\newtheorem{thm}{Theorem}
  	\newtheorem{cor}{Corollary}
  	\newtheorem{lem}{Lemma}

	\theoremstyle{definition}

	\theoremstyle{remark}
  	\newtheorem{rem}{Remark}
	\newtheorem*{ex}{Example}

	\newcommand{\M}{\mathcal{M}}
	\newcommand{\Mbar}{\overline{\mathcal{M}}}
	
	\newcommand{\ZZ}{\mathbb{Z}}

	\makeatletter
	\@namedef{subjclassname@2010}{%
	\textup{2010} Mathematics Subject Classification}
	\makeatother

\begin{document}

\title{Pointed Castelnuovo numbers}

\author{Gavril Farkas}
\address{Humboldt-Universit\"at zu Berlin, Institut f\"ur Mathematik, Unter den Linden 6, 10099 Berlin,\hfill \newline \indent Germany}
\email{farkas@math.hu-berlin.de}

\author{Nicola Tarasca}
\address{University of Utah, Department of Mathematics, 155 S 1400 E, Salt Lake City, UT 84112, USA}
\email{tarasca@math.utah.edu}

\subjclass[2010]{ 14Q05 (primary), 14H51 (secondary)}
\keywords{Brill-Noether theory, enumerative geometry on a general curve}

\begin{abstract}
The classical Castelnuovo numbers count linear series of minimal degree and fixed dimension on a general curve, in the case when this number is finite. For pencils, that is, linear series of dimension one, the Castelnuovo numbers specialize to the better known Catalan numbers. Using the  Fulton-Pragacz determinantal formula for flag bundles and combinatorial manipulations, we obtain a compact formula for the number of linear series on a general curve having prescribed ramification at an arbitrary point, in the case when the expected number of such linear series is finite. The formula is then used to solve some enumerative problems on moduli spaces of curves.
\end{abstract}

\maketitle

A {\it linear series} of type $\mathfrak{g}^r_d$ on a smooth curve $C$ of genus $g$ is a pair $\ell=(L,V)$ consisting of a line bundle $L$ on $C$ of degree $d$ and a subspace of global sections $V\subset H^0(C, L)$ of projective dimension $r$. The Brill-Noether theorem says that for a general curve $C$, the variety $G^r_d(C)$ of linear series $\mathfrak{g}^r_d$ on $C$ has dimension $\rho(g,r,d) := g-(r+1)(g-d+r)$, and is empty if $\rho(g,r,d)<0$. In particular, when $\rho(g,r,d)=0$ there is a finite number $N_{g,r,d}$ of linear series $\mathfrak{g}^r_d$. This number is equal to
\[
N_{g,r,d} = g!\prod_{i=0}^r \frac{i!}{(g-d+r+i)!}.
\]
Remarkably, Castelnuovo \cite{Castelnuovo} correctly determined $N_{g,r,d}$ in the 1880's using a subtle degeneration argument and Schubert calculus. However, the construction of the moduli space of curves, implicitly assumed in the degeneration, has been achieved only in the 1960's by Deligne and Mumford. A modern rigorous proof of the Brill-Noether theorem appeared in 1980 in the work of Griffiths and Harris \cite{MR563378} and is based on Castelnuovo's original degeneration.

\vskip 4pt

Similarly, one can consider linear series on a general curve having prescribed vanishing at a fixed {\it general} point. For a smooth curve $C$ of genus $g$, let $p\in C$ be a point and $\ell=(L,V)\in G^r_d(C)$. The {\it vanishing sequence} of $\ell$ at $p$
\begin{eqnarray}
 \label{seq}
a^{\ell}(p): 0\leq a_0 < \cdots < a_r \leq d
\end{eqnarray}
is the ordered sequence of distinct vanishing orders of sections in $V$ at the point $p$. Given $r,d$ and a sequence $a: 0\leq a_0 < \cdots < a_r \leq d$ as in (\ref{seq}), the {\it adjusted Brill-Noether number} is defined as
$\rho(g,r,d,a):= \rho(g,r,d)-\sum_{i=0}^r (a_i-i)$.
Eisenbud and Harris (\cite[Proposition 1.2]{MR910206}) proved that a general pointed curve $(C,p)$ of genus $g>0$ admits a linear series $\ell\in G^r_d(C)$ with vanishing sequence $a^{\ell}(p)=a$ if and only if
\begin{eqnarray}
\label{EH}
 \sum_{i=0}^r (a_i-i +g-d+r)_+ \leq g.
\end{eqnarray}
Here $(n)_+:=\max\{n,0\}$ for any integer $n$. Note that this condition is stronger than the condition $\rho(g,r,d,a)\geq 0$.
When (\ref{EH}) is satisfied, the variety of linear series $\ell\in G^r_d(C)$  with vanishing sequence $a$ at the point $p$ is pure of dimension $\rho(g,r,d,a)$.
As in the unpointed case, one can consider the zero-dimensional case.
Let $g,r,d$ be positive integers and $a: 0\leq a_0 < \cdots < a_r \leq d$ as above, such that $\rho(g,r,d, a)=0$.
Then, by (\ref{EH}) the curve $C$ admits a linear series $\mathfrak{g}^r_d$ with vanishing sequence $a$ at the point $p$ if and only if $a_0 +g-d+r\geq 0$. When such linear series exist, their number is counted by the {\it adjusted Castelnuovo number}
\begin{eqnarray}
\label{NN}
N_{g,r,d,a} = g! \frac{\prod_{i<j} (a_j - a_i) }{\prod_{i=0}^r (g-d+r+a_i)!}.
\end{eqnarray}
In order to prove (\ref{NN}), one can specialize the general curve of genus $g$ to a rational curve with $g$ elliptic tails attached to it, specialize the marked point to a point on the rational component, and count via Schubert calculus degenerations of linear series on this singular curve (see the proof of Proposition 1.2 in \cite{MR910206}).

From (\ref{EH}), it follows that if $a$ is the vanishing sequence at a {\it general} point of a linear series $\mathfrak{g}^r_d$ on the general curve, then necessarily $\rho(g,r,d,a)\geq 0$. Moreover, any linear series $\ell\in G^r_d(C)$ on a curve of genus $g=0,1$ satisfies $\rho(g,r,d,a^{\ell}(p))\geq 0$ for {\it any} point $p\in C$.

\vskip 4pt

For $g\geq 2$, pointed curves admitting a linear series with adjusted Brill-Noether number equal to $-1$ at the marked point form a divisor in $\M_{g,1}$, see \cite{MR985853}; when $\rho(g,r,d,a) \leq -2$ this locus has codimension at least $2$ in $\M_{g,1}$. In particular, for a general curve $C$ there exists no linear series $\ell\in G^r_d(C)$ satisfying $a^{\ell}(p)\geq a$ for a point $p\in C$ if $\rho(g,r,d,a) \leq -2$, see \cite{MR3053519}. It follows that for each $\ell\in G^r_d(C)$, the vanishing sequence $a^{\ell}(p)$ at an {\it arbitrary} point $p\in C$  satisfies $\rho(g,r,d,a^{\ell}(p))\geq -1$, and there is at most a finite number of points in $C$ where a linear series $\ell\in G^r_d(C)$ has vanishing sequence $a$ verifying $\rho(g,r,d,a)=-1$. The aim of this note is to determine this number. In the following formula, we let $\delta^i_j$ be the Kronecker delta and set $1/n!=0$, when $n<0$.

\begin{thm}
\label{Thm}
Fix $g\geq 2$ and $a: 0\leq a_0 < \cdots < a_r \leq d$ such that $\rho(g,r,d,{a})= -1$.
For a general curve $C$ of genus $g$, the number of pairs $(p, \ell) \in C\times G^r_d(C)$ such that $a^{\ell}(p)={a}$ is equal to
\begin{align}
\label{nthm}
n_{g,r,d,a}:=
 g! \sum_{0\leq j_1<j_2\leq r } \Bigl( (a_{j_2}-a_{j_1})^2-1  \Bigr) \frac{\prod_{0\leq i<k\leq r}\bigl(a_k-\delta_k^{j_1}-\delta_k^{j_2}-a_i+\delta_i^{j_1}+\delta_i^{j_2}\bigr)}{\prod_{i=0}^r \bigl(g-d+r+a_i-\delta_i^{j_1}-\delta_i^{j_2}\bigr)!}.
\end{align}
\end{thm}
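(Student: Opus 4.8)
The plan is to realize $n_{g,r,d,a}$ as the degree of a $0$-dimensional degeneracy locus on the product $C\times G^r_d(C)$, where $C$ is a fixed general curve and $G^r_d(C)$ is smooth of dimension $\rho:=\rho(g,r,d)$. First I would fix a Poincar\'e line bundle $\mathcal{L}$ on $C\times G^r_d(C)$, write $\pi$ for the projection to $G^r_d(C)$, and consider the relative principal parts bundles $J_k:=P^k_\pi(\mathcal{L})$ together with the tautological rank-$(r+1)$ subbundle $\mathcal{V}$ on $G^r_d(C)$. The evaluation maps $\varphi_k\colon \pi^*\mathcal{V}\to J_k$ translate the condition $a^\ell(p)\ge a$ into the nested rank inequalities $\operatorname{rank}(\pi^*\mathcal{V}\to J_{a_i-1})\le i$ for $0\le i\le r$, which cut out a degeneracy locus $\mathcal{D}$ of expected codimension $\sum_{i=0}^r(a_i-i)=\rho+1$, hence of expected dimension $0$. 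Since $\rho(g,r,d,a')\le -2$ for every sequence $a'$ strictly dominating $a$, the vanishing statement recalled before the theorem guarantees that on a general curve no pair has $a^\ell(p)>a$; thus $\mathcal{D}$ coincides with the locus $a^\ell(p)=a$, is reduced of the expected dimension, and $n_{g,r,d,a}=\int_{C\times G^r_d(C)}[\mathcal{D}]$.

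Next I would compute $[\mathcal{D}]$ by the Fulton--Pragacz determinantal formula for the tower of evaluation maps $\pi^*\mathcal{V}\to J_{a_i-1}$, compatible with the truncations $J_{a_r-1}\twoheadrightarrow\cdots\twoheadrightarrow J_{a_0-1}$, obtaining $[\mathcal{D}]$ as an $(r+1)\times(r+1)$ determinant whose entries are Chern classes of the virtual bundles $J_{a_i-1}-\pi^*\mathcal{V}$. The essential simplification comes from the Chow ring of $C\times G^r_d(C)$: with $\eta:=[\{p\}\times G^r_d(C)]$, $x:=c_1(\mathcal{L})$ and $\theta$ the pullback of the theta class, one has $\eta^2=0$, $x\eta=0$ and $x^2=-2\eta\theta$. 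The jet filtration, whose graded pieces are $\mathcal{L}\otimes(\pi^*\omega_C)^{\otimes m}$, then forces $c(J_{a_i-1})$ to be a class of Chow-degree at most two, of the shape $1+\bigl(a_i x+\lambda_i\eta\bigr)-a_i(a_i-1)\eta\theta$ for an explicit scalar $\lambda_i$. Consequently every entry of the determinant is linear in $x$, $\eta$ and $\eta\theta$ over the subring generated by the Segre classes of $\mathcal{V}$ and by $\theta$.

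Because $C$ is a curve, only the coefficient of $\eta$ survives integration along $C$, so I would extract the $\eta$-linear part of the determinant modulo $\eta^2$. Multilinearity in the rows shows this part is a sum of three kinds of contributions, according to how the single surviving factor of $\eta$ is produced: from the $\eta$-term of one row's degree-one entry; from the $\eta\theta$-term of one row's degree-two entry; and, crucially, from two distinct rows each contributing their $x$-part, which combine through $x^2=-2\eta\theta$. This last mechanism is what produces the sum over pairs $0\le j_1<j_2\le r$, with the cross-term $a_{j_1}a_{j_2}$ of the eventual weight, while $x\eta=0$ annihilates all remaining cross-terms and keeps the expansion finite.

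Finally I would integrate each resulting term over $C\times G^r_d(C)$. Pairing off $\int_C\eta=1$ leaves determinants on $G^r_d(C)$ that are exactly the Fulton--Pragacz determinants computing adjusted Castelnuovo numbers for sequences obtained from $a$ by decreasing one or two of its entries by one, possibly multiplied by $\theta$; evaluating these against the fundamental class reproduces the closed formula (\ref{NN}) for the modified sequences. The remaining and genuinely delicate step is the combinatorial manipulation that reorganizes the single-index contributions, via Pascal-type identities satisfied by the expression $g!\prod_{i<k}(b_k-b_i)/\prod_i(g-d+r+b_i)!$, into the same double sum indexed by the sequences obtained by decreasing $a_{j_1}$ and $a_{j_2}$ by one, so that all three families assemble into the single weight $(a_{j_2}-a_{j_1})^2-1$. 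I expect this bookkeeping, namely reconciling the two independent sources of $\eta$ and tracking the signs and factorial shifts so that the $-1$ correction emerges, to be the main obstacle; the geometric input, by contrast, is the essentially formal computation of the jet Chern classes in the light of $\eta^2=x\eta=0$ and $x^2=-2\eta\theta$.
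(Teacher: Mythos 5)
Your geometric setup is a legitimate variant of the one in the paper, but two points prevent the proposal from being a proof. First, you work on $C\times G^r_d(C)$ with the tautological subbundle $\mathcal{V}$, so the entries of your Fulton--Pragacz determinant involve the Segre classes of $\mathcal{V}$, and after extracting the coefficient of $\eta$ you must evaluate top-degree monomials in $\theta$ and these Segre classes against $[G^r_d(C)]$. Those integrals are not supplied, and the claim that they are ``exactly'' the adjusted Castelnuovo numbers is only plausible for terms of a special shape: decreasing two entries of $a$ by one produces a sequence with adjusted Brill--Noether number $+1$, so the corresponding determinant has codimension $\rho-1$ and must be paired with an extra $\theta$ or with Segre classes of $\mathcal{V}$ before integrating --- such numbers are not of the form (\ref{NN}) and require the full intersection theory of $G^r_d(C)$, which you neither cite nor compute. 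The paper sidesteps this entirely by working on $C\times {\rm Pic}^{d+m}(C)$ with the rank-$(d+m-g+1)$ bundle $\mathcal{E}=\pi_*\mathcal{L}$, for which $c_t(\mathcal{E})=e^{-t\theta}$; every entry of its determinant is then an explicit polynomial in $\theta$, $\eta_1$, $\gamma_{1,3}$, and the whole expansion reduces to a Vandermonde-type identity together with $\int\theta^g=g!$.

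Second, and more seriously, you explicitly defer ``the combinatorial manipulation that reorganizes the single-index contributions \ldots\ so that all three families assemble into the single weight $(a_{j_2}-a_{j_1})^2-1$,'' calling it the main obstacle. That reorganization \emph{is} the theorem: the geometric input is essentially formal (as you yourself note), and everything in the paper's proof after display (\ref{det}) is devoted to establishing precisely this identity --- via Lemmas \ref{comb1} and \ref{comb2}, which show that the bracketed expressions are divisible by the Vandermonde product with symmetric quotients $P_i(r,a)$ of bounded degree in $r$, followed by explicit verification of the identity $h_{s,r,a}=f_{s,r,a}$ for $1\le r\le 7$, which suffices by polynomiality in $r$. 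No ``Pascal-type identity'' is actually exhibited in your proposal, and without one the compact formula (\ref{nthm}) is not derived. So the proposal reproduces the standard degeneracy-locus setup in a form that makes the integrals harder to evaluate, and omits the step that actually proves the statement.
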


Since $\rho(g,r,d,a)=-1$ and necessarily $\rho(g,r,d)\geq 0$, note that $n_{g,r,d,a}=0$ in the case $a=(0,1,2,\dots,r)$.
The case $r=1$ was previously known. Indeed, up to subtracting a base point, one can suppose that $a_0=0$. Since $\rho(g,1,d,a)=-1$, one has $d\geq \frac{g}{2}+1$ and $a_1=2d-g$.
In Theorem \ref{Thm}, we  recover the following formula from \cite[Theorem B]{MR664324} for the number of pencils vanishing with order $2d-g$ at some unspecified point:
\[
 n_{g,1,d,(0,2d-g)} = (2d-g-1)(2d-g)(2d-g+1)\frac{g!}{d!(g-d)!}.
\]

When $a=(0,1,\dots,r-1,r+1)$ and $\rho(g,r,d)=0$, there is only one non-zero summand in the formula for $n_{g,r,d,a}$. We recover the Pl\"ucker formula for the total number of ramification points on every linear series $\mathfrak{g}^r_d$ on a general curve, see \cite[pg. 345]{MR846932}:
\begin{eqnarray*}
n_{g,r,d,a} =  N_{g,r,d}(r+2)(r+1)r(g-d+r) =  N_{g,r,d}(r+1)\bigl(d+r(g-1)\bigr).
\end{eqnarray*}

\vskip 4pt

Let us consider the next non-trivial example. Suppose $\rho(g,r,d)=n-r-1>0$, and let $s:=g-d+r$. The number of linear series $\ell\in G^r_d(C)$ on a general curve $C$ of genus $g$ satisfying the condition $|\ell(-n\cdot p)|\neq \emptyset$ at a certain unspecified point $p\in C$ is equal to
\begin{eqnarray*}
n_{g,r,d,(0,1,\dots,r-1,n)} = \frac{g!  \cdot n (n^2-1) }{(s-1)! (s+n-1)! (r-1)!} \prod_{i=2}^r \frac{i!\cdot (n-i)}{(s-1+i)!}.
\end{eqnarray*}

\vskip 4pt

Theorem \ref{Thm} is proven in \S \ref{count} using the determinantal formula for flag bundles. The resulting determinant is simplified through a series of combinatorial manipulations.
As an application, we compute classes of closures of pointed Brill-Noether divisors in $\Mbar_{g,1}$ in \S \ref{BNdivclasses}, after a result of Eisenbud and Harris. In \S \ref{codim2} we deduce the non-proportionality of closures of Brill-Noether classes of codimension $2$ in $\Mbar_g$.

\vskip 4pt

We remark that proving Theorem \ref{Thm} via a degeneration argument and Schubert calculus is not feasible. In contrast to the situation from \cite{MR910206} where one computes the numbers $N_{g,r,d,a}$ by specializing to a curve having a rational component and $g$ elliptic tails, here one would have to describe all linear series on elliptic curves having prescribed vanishing at two unspecified points (the exceptional ramification point and the point of attachment to the rest of the curve). However, unlike for $1$-pointed elliptic curves, there is no adequate lower bound for Brill-Noether numbers on arbitrary $2$-pointed elliptic curves. In particular we get a lot more linear series with prescribed ramification than we expect and it is difficult to determine which of these limit linear series are smoothable.


\section{Counting Brill-Noether special points}
\label{count}

Let $C$ be a general curve of genus $g\geq 2$ and fix positive integers  $r$ and $d$, as well as a sequence $$a: 0\leq a_0 < \cdots < a_r \leq d$$  with $\rho(g,r,d,a)=-1$. In this section we count the number $n_{g,r,d,a}$ of pairs $(y,\ell)\in C\times G^r_d(C)$ such that $a^{\ell}(y)=a$. Note that every such linear series is complete.

\vskip 4pt

Let $p$ be a general point of $C$. Choose $m$ such that the line bundle $L\otimes \mathcal{O}_C(mp)$ is non-special for every $L\in {\rm Pic}^d(C)$ (for instance, $m=\max\{2g-2-d+1 ,0\}$). The natural evaluation maps
\[
H^0(L\otimes \mathcal{O}_C(mp)) \rightarrow H^0(L\otimes \mathcal{O}_C(mp)|_{mp+a_r y}) \twoheadrightarrow \cdots \twoheadrightarrow H^0(L\otimes \mathcal{O}_C(mp)|_{mp+a_0 y})
\]
globalize to
\[
\pi^*(\mathcal{E}) \rightarrow \mu_*(\nu^* \mathcal{L}\otimes \mathcal{O}_{D_r})=:\mathcal{M}_r \twoheadrightarrow \cdots \twoheadrightarrow \mu_*(\nu^* \mathcal{L}\otimes \mathcal{O}_{D_0})=:\mathcal{M}_0
\]
as maps of vector bundles over $C\times {\rm Pic}^{d+m}(C)$. Here $\mathcal{L}$ is a Poincar\'e bundle on $C\times {\rm Pic}^{d+m}(C)$, the map $\pi\colon C\times {\rm Pic}^{d+m}(C)\rightarrow {\rm Pic}^{d+m}(C)$ is the second projection, $\mathcal{E}$ is a vector bundle of rank $d+m-g+1$ defined as $\mathcal{E}:=\pi_*(\mathcal{L})$, the maps $\mu\colon C\times C\times {\rm Pic}^{d+m}(C)\rightarrow C\times \mbox{Pic}^{d+m}(C)$ and $\nu\colon C\times C\times {\rm Pic}^{d+m}(C)\rightarrow C\times \mbox{Pic}^{d+m}(C)$  are the projections onto the first and third, and the second and third factors respectively, and finally $\mathcal{O}_{D_i}$ is the structure sheaf of the divisor $D_i$ in $C\times C$ whose restriction to $\{ y\}\times C \cong C$ is $mp+a_i y$.

\vskip 4pt

We are interested in the locus of pairs $(y,L)$ such that $h^0(L\otimes \mathcal{O}_C(-a_i y))\geq r+1-i$, for $i=0,\dots,r$. This is the locus where the morphism of vector bundles
\[
\varphi_i \colon \pi^*(\mathcal{E}) \rightarrow \mathcal{M}_i
\]
has rank at most $d+m+i-g-r$, for $i=0,\dots,r$. The class of this locus can be computed using Fulton-Pragacz determinantal formula for flag bundles
\cite[Theorem 10.1]{MR1154177}.

\vskip 4pt

We shall first compute the Chern polynomial of the bundles $\mathcal{M}_i$. Let $\pi_i:C\times C\times {\rm Pic}^{d+m}(C) \rightarrow C$ for $i=1,2$ and $\pi_3:C\times C\times {\rm Pic}^{d+m}(C) \rightarrow {\rm Pic}^{d+m}(C)$ be the natural projections. Denote by $\theta$ the pull-back to $C\times C\times {\rm Pic}^{d+m}(C)$ of the class $\theta \in H^2({\rm Pic}^{d+m}(C))$ via $\pi_3$, and denote by $\eta_i$ the cohomology class $\pi_i^*([{\rm point}]) \in H^2(C\times C\times {\rm Pic}^{d+m}(C))$, for $i=1,2$. Note that $\eta_i^2=0$. Furthermore, given a symplectic basis $\delta_1,\dots,\delta_{2g}$ for $H^1(C,\ZZ)\cong H^1({\rm Pic}^{d+m}(C),\ZZ)$, we denote by $\delta^i_\alpha$ the pull-back to $C\times C\times {\rm Pic}^{d+m}(C)$ of $\delta_\alpha$ via $\pi_i$, for $i=1,2,3$. Let us define the class
\[
\gamma_{i,j}:= - \sum_{s=1}^{g} \left( \delta^j_s \delta^i_{g+s} -  \delta^j_{g+s} \delta^i_s   \right).
\]
Note that
\[
\begin{array}{cclccccclc}
\gamma_{1,2}^2 &=& -2g \eta_1 \eta_2  & \mbox{ and}  & \eta_i \gamma_{1,2} &=& \gamma_{1,2}^3=0,   &\mbox{ for}  & i=1,2,\\
\gamma_{k,3}^2 &=& -2 \eta_k \theta & \mbox{ and} & \eta_k \gamma_{k,3} &=& \gamma_{k,3}^3=0, & \mbox{ for} & k=1,2,
\end{array}
\]
\[
\gamma_{i,j}\gamma_{j,3}  \quad = \quad  \eta_j \gamma_{i,3}, \quad\quad\mbox{for}\quad \{i,j\}=\{1,2\}.
\]
From \cite[\S VIII.2]{MR770932}, we have
\begin{eqnarray*}
ch(\nu ^* \mathcal{L}) &=& 1+(d+m)\eta_2 + \gamma_{2,3}-\eta_2 \theta,\\
ch(\mathcal{O}_{D_i}) &=& 1-e^{-(a_i \eta_1+a_i \gamma_{1,2}+(a_i+m)\eta_2)},
\end{eqnarray*}
\noindent hence via the Grothendieck-Riemann-Roch formula
\begin{eqnarray*}
ch(\mathcal{M}_i) &=& \mu_* \left(  (1+(1-g)\eta_2) \cdot ch(\nu ^* \mathcal{L} \otimes \mathcal{O}_{D_i}) \right) \\
              &=& a_i+m +\eta_1 (a_i^2(g-1)+a_i(d-g+1))+a_i \gamma_{1,3}-a_i\eta_1\theta.
\end{eqnarray*}
It follows that the Chern polynomial of $\mathcal{M}_i$ is
\[
c_t(\mathcal{M}_i) = 1+\eta_1 (a_i^2(g-1)+a_i(d-g+1))+a_i \gamma_{1,3}+(a_i-a_i^2)\eta_1\theta.
\]
Recall that $c_t(\mathcal{E})=e^{-t\theta}$ (\cite[\S VIII.2]{MR770932}). In the following, we will use the Chern classes $c^{(i)}_t:= c_t (\mathcal{M}_i-\mathcal{E})$, that is,
\[
c^{(i)}_1 = \eta_1 (a_i^2(g-1)+a_i(d-g+1))+a_i \gamma_{1,3} + \theta
\]
and
\[
c^{(i)}_j = \frac{\theta^j}{j!}+\eta_1\theta^{j-1}\left( \frac{a_i^2(g-1)+a_i(d-g+1)}{(j-1)!}+\frac{a_i-a_i^2}{(j-2)!} \right) +\frac{a_i}{(j-1)!}\gamma_{1,3}\theta^{j-1}
\]
for $j\geq 2$.

From the Fulton-Pragacz formula \cite[Theorem 10.1]{MR1154177}, the number of pairs $(y,\ell)$ in $C\times G^r_d(C)$ with $a^\ell(y)=a$ is the degree of the following $(r+1)\times(r+1)$ matrix
\begin{eqnarray}
\label{ndet}
 n_{g,r,d,a}=
\deg \left[
\begin{array}{cccc}
c^{(r)}_{g-d+r+a_r-r} & \cdots & & c^{(r)}_{g-d+r+a_r}\\
c^{(r-1)}_{g-d+r+a_{r-1}-r} & c^{(r-1)}_{g-d+r+a_{r-1}-(r-1)} & \cdots & c^{(r-1)}_{g-d+r+a_{r-1}}\\
\vdots &  & \ddots &\vdots \\
c^{(0)}_{g-d+r+a_0-r} & \cdots & & c^{(0)}_{g-d+r+a_0}
\end{array}
\right].
\end{eqnarray}
Since $\eta_1^2=\eta_1\gamma_{1,3}=\theta^{g+1}=0$, many terms in the expansion of the above determinant are zero. The only terms that survive are the ones obtained by multiplying a summand
\[
 \eta_1\theta^{j-1}\left( \frac{a_i^2(g-1)+a_i(d-g+1)}{(j-1)!}+\frac{a_i-a_i^2}{(j-2)!} \right)
\]
of one of the classes $c^{(i)}_j$ with $r$ summands $\frac{\theta^j}{j!}$ from the other classes $c^{(i)}_j$, or the terms obtained by multiplying two summands
\[
 \frac{a_i}{(j-1)!}\gamma_{1,3}\theta^{j-1}
\]
of two different classes $c^{(i)}_j$ with $r-1$ summands $\frac{\theta^j}{j!}$ from the other classes $c^{(i)}_j$. We use the following variation of the Vandermonde determinant
\begin{eqnarray*}
 \left[
\begin{array}{cccc}
\frac{1}{(b_r-r)!} & \cdots & & \frac{1}{b_r!}\\
\frac{1}{(b_{r-1}-r)!} & \frac{1}{(b_{r-1}-(r-1))!} & \cdots & \frac{1}{b_{r-1}!}\\
\vdots &  & \ddots &\vdots \\
\frac{1}{(b_{0}-r)!} & \cdots & & \frac{1}{b_{0}!}
\end{array}
\right] = \frac{\prod_{l<k} (b_k-b_l)}{\prod_{j=0}^r b_j!}.
\end{eqnarray*}
Hence the quantity (\ref{ndet}) can be written as
\begin{multline}
\label{det}
n_{g,r,d,a}=
\frac{g!}{\prod_{j=0}^r (g-d+r+a_j)!}\\
{}\times \left[\sum_{i=0}^r (a_i^2(g-1)+a_i(d-g+1))(g-d+r+a_i)\prod_{0\leq l< k\leq r}(a_k-\delta^i_k-a_l+\delta^i_l) \right.\\
{}+\sum_{i=0}^r (a_i-a_i^2)(g-d+r+a_i)(g-d+r+a_i-1)\prod_{0\leq l< k\leq r} (a_k-2\delta^i_k-a_l+2\delta^i_l) \\
{} -2\sum_{0\leq i_1<i_2\leq r} a_{i_1} a_{i_2} (g-d+r+a_{i_1})(g-d+r+a_{i_2})\\
\left. \prod_{0\leq l< k\leq r} (a_k-\delta^{i_1}_k-\delta^{i_2}_k-a_l+\delta^{i_1}_l+\delta^{i_2}_l)\right]
\end{multline}
where $\delta^i_j$ is the Kronecker delta.

Remember that $g,r,d,a$ satisfy the condition $\rho(g,r,d,a)=-1$. In the following we use the independent variables $r, a_1, \dots, a_r,$ and $s:=g-d+r$. Note that
\begin{align*}
g &= rs+s-1+\sum_{i=0}^r (a_i-i), &
d &= rs+r-1+\sum_{i=0}^r (a_i-i) .
\end{align*}
Since the right-hand side of (\ref{ndet}) is zero if $a_i=a_j$ for any $i\not=j$, we can write (\ref{det}) as
\begin{eqnarray}
\label{withP}
n_{g,r,d,a}=g!\frac{\prod_{0\leq i<j\leq r} (a_j - a_i)}{\prod_{j=0}^r (g-d+r+a_j)!}\Bigl(P_2(r,a) s^2+ P_3(r,a) s+ P_4(r,a) \Bigr)
\end{eqnarray}
where $P_i(r,a)$ is a polynomial in the variables $r$ and $a_0,\dots,a_r$ which is symmetric in $a_0,\dots,a_r$ for $i=2,3,4$. Note that the expression in the square brackets in (\ref{det}) can be reduced to a linear combination of the following expressions
\begin{eqnarray*}
 \sum_{i=0}^r \, a_i^t \,\prod_{l<k} (a_k -\delta^i_k -a_l +\delta^i_l),\\
 \sum_{i=0}^r \,a_i^t \,\prod_{l<k} (a_k -2\delta^i_k -a_l +2\delta^i_l),\\
 \sum_{i<j} \,(a_i^t a_j^u+a_i^u a_j^t) \,\prod_{l<k} (a_k -\delta^i_k-\delta^j_k -a_l +\delta^i_l+\delta^j_l),
\end{eqnarray*}
for $t,u\geq 0$ such that $t+u\leq 4$. From Lemma \ref{comb1} and Lemma \ref{comb2} (see below), the polynomial $P_i(r,a)$ is symmetric of degree $i$ in $a_0,\dots,a_r$ and has degree at most $i+2$ in $r$, for $i=2,3,4$.

Since the polynomials $P_i(r,a)$ are symmetric in $a_0,\dots,a_r$, they can be expressed in terms of the standard symmetric polynomials in $a_0,\dots, a_r$. That is, we can write $P_i(r,a)$ as a linear combination of the finitely many monomials in
\begin{align*}
\sigma_1 &= \sum_{0\leq i \leq r} a_i, & \sigma_2 &= \sum_{0\leq i < j \leq r} a_i a_j, &
\sigma_3 &= \sum_{0\leq i < j < k \leq r} a_i a_j a_k, & \sigma_4 &= \sum_{0\leq i < j < k <l \leq r} a_i a_j a_k a_l
\end{align*}
of degree at most $i$ in $a_0,\dots,a_r$, with polynomials in $r$ of degree at most $i+2$ as coefficients.
By the bound on the degree in $r$, the polynomial $P_i(r,a)$ is determined by its values at integers $r$ with $1\leq r\leq i+3$.
Hence, the expression in the square brackets in (\ref{det}) is determined by its values at integers $r$ with $1\leq r\leq 7$.


To complete the proof, it remains to verify the equality of the cumbersome expression for $n_{g,r,d,a}$ in (\ref{det}) and the compact expression in (\ref{nthm}).
By pulling out the denominators, the expression in (\ref{nthm}) can be rewritten as follows
\begin{multline}
\label{thm2}
\frac{g!}{\prod_{j=0}^r (g-d+r+a_j)!}\\
 {}\times \left[ \sum_{0\leq j_1<j_2\leq r } \Bigl( (a_{j_2}-a_{j_1})^2-1  \Bigr) (s+a_{j_1})(s+a_{j_2})\prod_{0\leq i<k\leq r}(a_k-\delta_k^{j_1}-\delta_k^{j_2}-a_i+\delta_i^{j_1}+\delta_i^{j_2}) \right].
\end{multline}
Let $f_{s,r,a }$ be the polynomial in the square brackets in (\ref{thm2}), and let $h_{s,r,a}$ be the polynomial in the square brackets in (\ref{det}).
By Lemma \ref{comb2}, formula (\ref{thm2}) can also be written as in (\ref{withP}), with polynomials $P'_i(r,a)$ symmetric of degree $i$ in $a_0,\dots,a_r$ and of degree at most $i+2$ in $r$, for $i=2,3,4$. Hence, to show that (\ref{thm2}) coincides with (\ref{det}), 
it is enough to show that the polynomials $f_{s,r,a}$ and $h_{s,r,a}$ coincide for $1\leq r \leq 7$.
When $r=1$, one has
\begin{eqnarray*}
h_{s,1,a}&=& (a_1-a_0)\Bigl( (\sigma_1^2-4\sigma_2 -1)s^2+ (\sigma_1^3 -4\sigma_1 \sigma_2  - \sigma_1)s +
\sigma_1^2 \sigma_2 -4 \sigma_2^2   -\sigma_2
 \Bigr) =f_{s,1,a}.
\end{eqnarray*}
Thereafter, one verifies the case $r=2$:
\begin{eqnarray*}
h_{s,2,a}&=& \prod_{0\leq i<j\leq 2} (a_j - a_i) \Bigl( (2\sigma_1^2 - 6 \sigma_2 -6)s^2+ (2 \sigma_1^3-7\sigma_1\sigma_2 + 9 \sigma_3
+3\sigma_2 -\sigma_1^2 -4\sigma_1 +3)s \\
&& + \sigma_1^2\sigma_2 -4\sigma_2^2 + 3\sigma_1\sigma_3 -\sigma_1^3 -9\sigma_3
+4\sigma_1\sigma_2 +\sigma_1^2 -5\sigma_2 +\sigma_1 -1  \Bigr)\\
&=& f_{s,2,a},
\end{eqnarray*}
the case $r=3$:
\begin{eqnarray*}
h_{s,3,a}&=& \prod_{0\leq i<j\leq 3} (a_j - a_i) \Bigl(  (3\sigma_1^2  -8\sigma_2 -20)s^2 + (3\sigma_1^3 -10\sigma_1\sigma_2 +12\sigma_3
+8\sigma_2 -3\sigma_1^2 -10\sigma_1 +20)s   \\
&&{}+ \sigma_1^2\sigma_2  -4\sigma_2^2 +3\sigma_1\sigma_3 -3\sigma_1^3 -18\sigma_3 +11\sigma_1\sigma_2
+ 4\sigma_1^2 -14\sigma_2 +5\sigma_1 - 10\Bigr)\\
&=& f_{s,3,a},
\end{eqnarray*}
the case $r=4$:
\begin{eqnarray*}
h_{s,4,a}&=& \prod_{0\leq i<j\leq 4} (a_j - a_i) \Bigl( (4\sigma_1^2 -10\sigma_2 -50)s^2 \\
&&{}+ (4\sigma_1^3 -13\sigma_1\sigma_2
+ 15\sigma_3 +15 \sigma_2 - 6\sigma_1^2 -20\sigma_1 +75 )s \\
&&{} + \sigma_1^2\sigma_2 -4\sigma_2^2 + 3\sigma_1\sigma_3 -6\sigma_1^3 -30\sigma_3 +21\sigma_1\sigma_2
+ 10\sigma_1^2 - 30 \sigma_2 + 15\sigma_1 -50 \Bigr)\\
&=& f_{s,4,a},
\end{eqnarray*}
the case $r=5$:
\begin{eqnarray*}
h_{s,5,a}&=& \prod_{0\leq i<j\leq 5} (a_j - a_i) \Bigl( (5\sigma_1^2 -12\sigma_2 -105)s^2 \\
&&{}+ (5\sigma_1^3 -16\sigma_1\sigma_2
+18\sigma_3 + 24\sigma_2 -10\sigma_1^2 - 35\sigma_1 +210)s\\
&&{}+ \sigma_1^2 \sigma_2 -4\sigma_2^2 +3\sigma_1\sigma_3 -10\sigma_1^3 -45\sigma_3 +34\sigma_1\sigma_2
+20\sigma_1^2 -55\sigma_2 +35\sigma_1 -175 \Bigr)\\
&=& f_{s,5,a},
\end{eqnarray*}
the case $r=6$:
\begin{eqnarray*}
h_{s,6,a}&=& \prod_{0\leq i<j\leq 6} (a_j - a_i) \Bigl( (6\sigma_1^2 -14\sigma_2 -196)s^2 \\
&&{} +(6\sigma_1^3 -19\sigma_1\sigma_2 +21\sigma_3 +35\sigma_2 -15\sigma_1^2 -56\sigma_1 +490)s\\
&&{} +\sigma_1^2\sigma_2 -4\sigma_2^2 + 3\sigma_1\sigma_3 -15\sigma_1^3 -63\sigma_3 +50\sigma_1\sigma_2
+35\sigma_1^2 -91\sigma_2 + 70\sigma_1 -490 \Bigr)\\
&=& f_{s,6,a},
\end{eqnarray*}
and, finally, the case $r=7$:
\begin{eqnarray*}
h_{s,7,a}&=& \prod_{0\leq i<j\leq 7} (a_j - a_i) \Bigl( (7\sigma_1^2 -16\sigma_2 -336)s^2 \\
&&{} +(7\sigma_1^3 -22\sigma_1\sigma_2 +24\sigma_3 +48\sigma_2 -21\sigma_1^2 -84\sigma_1 +1008)s\\
&&{}+\sigma_1^2\sigma_2 -4\sigma_2^2 + 3\sigma_1\sigma_3 - 21\sigma_1^3 -84\sigma_3
+69\sigma_1\sigma_2 +56\sigma_1^2 -140\sigma_2 +126\sigma_1 -1176   \Bigr)\\
&=& f_{s,7,a}.
\end{eqnarray*}
Since $h_{s,r,a}=f_{s,r,a}$ holds for $1\leq r \leq 7$, the formulae (\ref{det}) and (\ref{thm2}) coincide for all $r$. Theorem \ref{Thm} follows.
${}$\hfill$\square$

\vskip1pc

\begin{rem}
We record the values of the polynomials $P_i(r,a)$ appearing in the formula (\ref{withP}):
\begin{eqnarray*}
P_2(r,a) &=& r\sigma_1^2-2(r+1)\sigma_2-\frac{r(r+1)^2(r+2)}{12},\\
P_3(r,a) &=& r\sigma_1^3 - (3r+1)\sigma_1\sigma_2 +3(r+1)\sigma_3 \\
&&{}+ (r^2-1)\sigma_2 -\frac{r(r-1)}{2}\sigma_1^2-\frac{r(r+1)(r+2)}{6}\sigma_1\\
&&{}+\frac{(r-1)r(r+1)^2(r+2)}{24},\\
P_4(r,a) &=& \sigma_1^2 \sigma_2-4\sigma_2^2+3\sigma_1\sigma_3 \\
&&{}-\frac{r(r-1)}{2}\sigma_1^3 -\frac{3r(r+1)}{2}\sigma_3+\frac{(r-1)(3r+2)}{2}\sigma_1 \sigma_2\\
&&{}+\frac{(r-1)r(r+1)}{6}\sigma_1^2 -\frac{r(r+1)(2r+1)}{6}\sigma_2\\
&&{}+\frac{(r-1)r(r+1)(r+2)}{24}\sigma_1 -\frac{(r-1)r^2(r+1)^2(r+2)}{144}.
\end{eqnarray*}
\end{rem}

In the above proof, we have used the following two lemmata.

\begin{lem}
\label{comb1}
We have
\[
 \sum_{i=0}^r \, a_i^t \,\prod_{l<k} (a_k -\delta^i_k -a_l +\delta^i_l) = P(r,a)\prod_{l<k} (a_k  -a_l )
\]
where $P(r,a)$ is a polynomial in $r$ and $a_0,\dots, a_r$, symmetric of degree $t$ in $a_0,\dots, a_r$, and of degree at most $t+1$ in $r$.
\end{lem}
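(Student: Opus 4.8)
The plan is to recognize the product over pairs as a shifted Vandermonde determinant, to rewrite the whole left-hand side in the form $V(a)\cdot P(r,a)$, and then to evaluate $P(r,a)$ as a residue at infinity, which makes the symmetry and both degree bounds transparent at once. First I would observe that, for fixed $i$, the factor $a_k-\delta^i_k-a_l+\delta^i_l$ equals $a_k-a_l$ unless $i\in\{l,k\}$; in the two exceptional cases it is exactly the factor of the Vandermonde product $V(a):=\prod_{l<k}(a_k-a_l)$ in which $a_i$ has been replaced by $a_i-1$. Thus the $i$-th product equals $V(a_0,\dots,a_i-1,\dots,a_r)=V(a)\,R_i(a)$, and a one-line cancellation (treating $l<i$ and $l>i$ uniformly) gives $R_i(a)=\prod_{l\neq i}\frac{a_i-a_l-1}{a_i-a_l}$. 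Hence the left-hand side is $V(a)\,P(r,a)$ with
\[
P(r,a)=\sum_{i=0}^r a_i^{\,t}\prod_{l\neq i}\frac{a_i-a_l-1}{a_i-a_l},
\]
and it remains to show that this a priori rational expression is a symmetric polynomial of the stated degrees.

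The key step is a residue computation. In an auxiliary variable $z$, set $g(z):=z^{t}\prod_{l=0}^{r}\frac{z-a_l-1}{z-a_l}$. For distinct $a_i$ the pole of $g$ at $z=a_i$ is simple with residue $-a_i^{\,t}R_i(a)$, so the residue theorem on $\mathbb{P}^{1}$ gives $P(r,a)=\mathrm{Res}_{z=\infty}g(z)=-[z^{-1}]g(z)$, where $[z^{-1}]g$ denotes the coefficient of $z^{-1}$ in the Laurent expansion of $g$ at infinity. This identity of rational functions holds on the dense locus of distinct $a_i$, hence identically, so in particular $P(r,a)$ is a polynomial. Writing $\log\prod_l\frac{z-a_l-1}{z-a_l}=-\sum_{m\geq 1}c_m z^{-m}$ with $c_m=\frac1m\sum_{j=0}^{m-1}\binom{m}{j}p_j$ and $p_j:=\sum_l a_l^{\,j}$, one reads off $P(r,a)=-e_{t+1}$, where $e_{t+1}$ is the coefficient of $z^{-(t+1)}$ in $\exp\bigl(-\sum_m c_m z^{-m}\bigr)$. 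Since $e_{t+1}$ is a universal polynomial in the power sums $p_0,\dots,p_t$, and $p_0=r+1$ is the only place where $r$ enters, the symmetry of $P$ in $a_0,\dots,a_r$ is automatic (so the separate ``alternating divided by Vandermonde'' argument is not even needed).

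The degree bounds then fall out of the same expansion, and this is where the only genuine difficulty lies. Each contribution to $e_{t+1}$ is a product $c_{m_1}\cdots c_{m_k}$ with $m_1+\cdots+m_k=t+1$; since $c_m$ has degree $\leq m-1$ in the $a_i$ and involves $r$ solely through its summand $\frac1m\binom{m}{0}p_0=\frac1m(r+1)$, such a product has degree at most $\sum_i(m_i-1)=t+1-k\leq t$ in $a_0,\dots,a_r$ and at most $k\leq t+1$ in $r$. This yields degree exactly $t$ in the $a_i$ (the top part comes only from the single factor $c_{t+1}$, whose leading power sum is $p_t$, with no cancellation) and degree at most $t+1$ in $r$, as claimed. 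The obstacle is precisely the bound on the degree in $r$: a direct expansion of the determinant, or of the sum $\sum_i a_i^{t}R_i(a)$, would not make the $r$-dependence visible, whereas passing to the residue at infinity isolates it entirely into $p_0=r+1$. As a sanity check, for $t=0$ one has $c_1=p_0=r+1$ and $P(r,a)=-e_1=c_1=r+1$, matching a direct computation.
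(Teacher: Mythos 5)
Your proof is correct, and it takes a genuinely different route from the paper's. You recognize the $i$-th product as the Vandermonde with $a_i$ shifted to $a_i-1$, so that $P(r,a)=\sum_i a_i^t\prod_{l\neq i}\frac{a_i-a_l-1}{a_i-a_l}$, and then evaluate this sum of residues as the residue at infinity of $z^t\prod_l\frac{z-a_l-1}{z-a_l}$; taking logarithms turns the answer into an explicit universal polynomial $-e_{t+1}$ in the power sums $p_0,\dots,p_t$, with all $r$-dependence concentrated in $p_0=r+1$. I checked the key identities: the shifted-Vandermonde ratio, the formula $c_m=\frac1m\sum_{j=0}^{m-1}\binom{m}{j}p_j$, and the degree count (a product $c_{m_1}\cdots c_{m_k}$ with $\sum m_i=t+1$ has degree $\leq t+1-k\leq t$ in the $a_i$ and degree $\leq k\leq t+1$ in $r$) are all right, and your formula reproduces the paper's example $P=p_1-\tfrac{r(r+1)}{2}$ for $t=1$ as well as $P=r+1$ for $t=0$. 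The paper argues more elementarily: antisymmetry of the left-hand side forces divisibility by $\prod_{l<k}(a_k-a_l)$ (hence symmetry of the quotient), and the degree bounds come from expanding each factor as $(a_k-a_l)+(\delta^i_l-\delta^i_k)$ and observing that choosing the Kronecker summand in $j$ factors produces $(r+1)\binom{r}{j}$ terms, a count of degree $j+1$ in $r$. Your approach buys a closed formula for $P$ and a fully mechanical justification of the degree-in-$r$ bound, which is the least transparent point of the paper's sketch; the paper's expansion is more elementary and carries over verbatim to the $2\delta^i_k$ variant and to Lemma \ref{comb2}, whereas your method would there require the two-index shifted Vandermonde and an iterated residue --- still workable, but worth spelling out if you intend to cover those cases as well.
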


\begin{proof}
It is easy to see that the left-hand side is anti-symmetric in $a_0,\dots, a_r$, hence we can factor by $\prod_{l<k} (a_k  -a_l )$ and obtain a quotient $P(r,a)$  symmetric in $a_0,\dots, a_r$. In particular, any monomial in the variables $a_i$ in the expansion of the left-hand side has degree at least $\frac{r(r+1)}{2}$.

Let us analyze the expansion of the left-hand side. If we first consider only the summands $a_k -a_l$ in each factor of each product, we obtain
\[
 \left(\sum_{i=0}^r  a_i^t \right)\prod_{l<k} (a_k  -a_l ).
\]
This is a homogeneous polynomial in the variables $a_i$ of degree $t+\frac{r(r+1)}{2}$ which contributes the summand $\sum_{i=0}^r  a_i^t$ to $P(r,a)$.

Next, let us consider non-zero summands of type $\delta^i_l -\delta^i_k$ in $j$ factors of each product, and the summands $a_k -a_l$ in the remaining factors of each product, for $1\leq j \leq r$. We obtain
\[
(r+1)\binom{r}{j}
\]
homogeneous polynomials in the variables $a_i$ of degree $t+\frac{r(r+1)}{2}-j$ with coefficients all equal to $1$. The sum of such polynomials, if nonzero, is a homogeneous polynomial in the variables $a_i$ of degree $t+\frac{r(r+1)}{2}-j\geq \frac{r(r+1)}{2}$ with coefficients polynomials in $r$ of degree at most $j+1$. Such polynomial contributes a summand to $P(r,a)$ of degree $t-j$ in the variables $a_i$ and degree at most $j+1$ in $r$ for $j\leq t$, hence the statement.
\end{proof}

\noindent The same result holds for the expressions
\[
 \sum_{i=0}^r \, a_i^t \,\prod_{l<k} (a_k -2\delta^i_k -a_l +2\delta^i_l).
\]
\begin{ex}
 It is easy to verify the following equality
\[
 \sum_{i=0}^r \, a_i \,\prod_{l<k} (a_k -\delta^i_k -a_l +\delta^i_l)= \left(\sum_{i=0}^r a_i - \frac{r(r+1)}{2} \right)\prod_{l<k} (a_k  -a_l ).
\]
\end{ex}
Similarly, we have the following.
\begin{lem}
\label{comb2}
We have
\[
 \sum_{i<j} \,(a_i^t a_j^u+a_i^u a_j^t) \,\prod_{l<k} (a_k -\delta^i_k-\delta^j_k -a_l +\delta^i_l+\delta^j_l) = P(r,a)\prod_{l<k} (a_k  -a_l )
\]
where $P(r,a)$ is a polynomial in $r$ and $a_0,\dots, a_r$, symmetric of degree $t+u$ in $a_0,\dots, a_r$, and of degree at most $t+u+2$ in $r$.
\end{lem}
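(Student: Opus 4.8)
The plan is to mirror the proof of Lemma \ref{comb1} as closely as possible, the only genuinely new feature being that the summation now runs over pairs of indices. Write $V:=\prod_{l<k}(a_k-a_l)$ for the Vandermonde, and for a pair $i<j$ let $a-e_i-e_j$ denote the vector whose $k$-th entry is $a_k-\delta^i_k-\delta^j_k$, so that the inner product equals the Vandermonde $V(a-e_i-e_j)$ in these shifted variables. Abbreviate $S_{ij}:=a_i^t a_j^u+a_i^u a_j^t$, which is symmetric in $a_i,a_j$, and call the left-hand side $G$. The strategy is: (i) show $G$ is anti-symmetric in $a_0,\dots,a_r$, so that $P:=G/V$ is a symmetric polynomial; (ii) read off the degree in $a$ from the top-degree expansion; (iii) bound the degree in $r$ by counting correction terms.

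For anti-symmetry, let $\tau$ be the transposition exchanging $a_m$ and $a_n$, and split the sum according to how $\{m,n\}$ meets the summation pair $\{i,j\}$. If $\{i,j\}\cap\{m,n\}=\emptyset$, then $S_{ij}$ is fixed while the two unmodified coordinates $m,n$ are merely swapped inside $V(a-e_i-e_j)$, producing a sign $-1$. If $\{i,j\}=\{m,n\}$, then $S_{mn}$ is fixed and the two equally shifted coordinates are swapped, again giving $-1$. In the mixed case, say $i=m$ and $j\notin\{m,n\}$, one checks that $\tau$ sends $S_{mj}\mapsto S_{nj}$ and $V(a-e_m-e_j)\mapsto -V(a-e_n-e_j)$, so the term indexed by $\{m,j\}$ maps to minus the term indexed by $\{n,j\}$; these terms pair off and contribute the overall sign $-1$. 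Hence $\tau G=-G$, and $P=G/V$ is symmetric.

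For the degree in $a$, I would expand each factor of $V(a-e_i-e_j)$ as $(a_k-a_l)$ plus a correction taking values in $\{-1,0,1\}$ coming from the Kronecker deltas. The term that keeps $a_k-a_l$ in every factor contributes $\bigl(\sum_{i<j}S_{ij}\bigr)V$, so the top $a$-degree part of $P$ is the symmetric polynomial $\sum_{i<j}(a_i^t a_j^u+a_i^u a_j^t)$ of degree $t+u$, while using a correction in exactly $\nu$ factors lowers the $a$-degree by $\nu$. For the degree in $r$, the key count is that the factors of $V$ sensitive to the pair $\{i,j\}$ — those involving exactly one of the indices $i,j$ — number $2(r-1)$, while the factor $\{i,j\}$ itself has vanishing correction. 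Choosing $\nu$ of them gives at most $\binom{2(r-1)}{\nu}$ correction terms of $a$-degree $t+u-\nu$, and there are $\binom{r+1}{2}$ pairs to sum over; hence the coefficient in $P$ of the $a$-degree $t+u-\nu$ part is a polynomial in $r$ of degree at most $\nu+2$. As $\nu$ ranges over $0\le \nu\le t+u$ this is maximized at $\nu=t+u$, giving degree at most $t+u+2$ in $r$, as claimed.

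I expect the main obstacle to be the mixed case of the anti-symmetry step, where the sign $-1$ in $V(a-e_m-e_j)\mapsto -V(a-e_n-e_j)$ must be tracked through the relabeling of coordinates, together with making the $r$-degree bound rigorous: one must argue, exactly as in Lemma \ref{comb1}, that the numerous correction terms of a fixed $a$-degree assemble into a single polynomial in $r$ whose degree is governed by the binomial counts $\binom{r+1}{2}$ and $\binom{2(r-1)}{\nu}$ above, rather than merely bounding each term separately.
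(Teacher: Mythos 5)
Your proof is correct and follows exactly the route the paper intends: the paper omits the proof of Lemma \ref{comb2}, stating only that it is ``similar'' to Lemma \ref{comb1}, and your argument is precisely that analogue --- anti-symmetry via the case split on how the transposition meets the summation pair, the top-degree term $\sum_{i<j}(a_i^ta_j^u+a_i^ua_j^t)$, and the $r$-degree bound $\nu+2$ coming from $\binom{r+1}{2}\binom{2(r-1)}{\nu}$ in place of $(r+1)\binom{r}{j}$. No gaps beyond the level of detail the paper itself accepts in the proof of Lemma \ref{comb1}.
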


\section{Classes of pointed Brill-Noether divisors}
\label{BNdivclasses}

As an application of Theorem \ref{Thm}, we compute pointed Brill-Noether divisor classes in $\Mbar_{g,1}$.  We fix a vanishing sequence $a:0\leq a_0<\ldots <a_r\leq d$ such that $\rho(g,r,d,a)=-1$ and let $\M^r_{g,d}\left( a \right)$ be the locus of smooth curves $(C,p)\in \M_{g,1}$ admitting a linear series $\ell \in G^r_d(C)$ having vanishing sequence $a^{\ell}(p)\geq a$.
Eisenbud and Harris proved in \cite[Theorem 4.1]{MR985853} that the class of the closure of a pointed Brill-Noether divisor $\M^r_{g,d}\left(a \right)$ in $\Mbar_{g,1}$ can be expressed as $\mu \mathcal{BN} + \nu \mathcal{W}$, where
\begin{eqnarray}
\label{BN}
\mathcal{BN} := (g+3)\lambda - \frac{g+1}{6}\delta_{\rm irr} - \sum^{g-1}_{i=1} i(g-i) \delta_i
\end{eqnarray}
is the class of the pull-back from $\Mbar_{g}$ of the Brill-Noether divisor,
\[
\mathcal{W} := -\lambda + \binom{g+1}{2} \psi - \sum_{i=1}^{g-1} \binom{g-i+1}{2} \delta_i
\]
is the class of the Weierstrass divisor, and $\mu$ and $\nu$ are some positive rational numbers. We use the method of test curves to find $\mu$ and $\nu$. Let $\delta^i_j$ be the Kronecker delta.

\begin{cor}
\label{divrho-1}
For $g>2$, the class of the divisor $\Mbar^r_{g,d}\left( a \right)$ in $\Mbar_{g,1}$ is equal to
\[
[\Mbar^r_{g,d}\left( a \right)] = \mu \cdot \mathcal{BN} + \nu \cdot \mathcal{W}
\]
where
\[
\mu = - \frac{n_{g,r,d,a} }{2(g^2-1)} + \frac{1 }{4\binom{g-1}{2}}\sum_{i=0}^r n_{g-1,r,d,(a_0+1-\delta^i_0,\dots,a_r+1-\delta^i_r)}
\quad\quad\mbox{and}\quad\quad
\nu = \frac{n_{g,r,d,a} }{g(g^2-1)}.
\]
\end{cor}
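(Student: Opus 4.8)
The plan is to take as given the Eisenbud--Harris structure result that $[\Mbar^r_{g,d}(a)] = \mu\,\mathcal{BN} + \nu\,\mathcal{W}$ for some rational $\mu,\nu$, and to determine the two unknowns by intersecting both sides with two independent test curves in $\Mbar_{g,1}$. Since $\mathcal{BN}$ and $\mathcal{W}$ are given explicitly in terms of $\lambda,\psi,\delta_{\mathrm{irr}},\delta_1,\dots,\delta_{g-1}$, each complete test curve produces one linear equation in $\mu$ and $\nu$, and two suitably chosen curves pin them down.

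For $\nu$ I would use the test curve $T_1$ obtained by fixing a general curve $C$ of genus $g$ and letting the marked point sweep out $C$; this is a complete curve lying over a single point of $\M_g$. On $T_1$ one has $\lambda=\delta_{\mathrm{irr}}=\delta_i=0$, while $\deg\psi=2g-2$, so $\mathcal{BN}\cdot T_1=0$ and $\mathcal{W}\cdot T_1=\binom{g+1}{2}(2g-2)=g(g^2-1)$. By Theorem~\ref{Thm} the intersection $[\Mbar^r_{g,d}(a)]\cdot T_1$ counts the pairs $(p,\ell)$ with $a^{\ell}(p)=a$ on the fixed general curve, where the inequality $a^{\ell}(p)\geq a$ forces equality since a strict inequality would give $\rho\leq -2$; this number is $n_{g,r,d,a}$. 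The equation $n_{g,r,d,a}=\nu\,g(g^2-1)$ then gives $\nu=n_{g,r,d,a}/(g(g^2-1))$, as claimed.

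For $\mu$ I would use a boundary test curve $T_2\subset\delta_1$: fix a general curve $Y$ of genus $g-1$, fix a general $2$-pointed elliptic curve $(E,p,q)$ carrying the marked point $p$, and glue $q\in E$ to a moving point $y\in Y$, parametrizing by $y$. Every member is $Y\cup_y E$, so $\lambda=\psi=\delta_{\mathrm{irr}}=\delta_j=0$ for $j\neq 1$ (the marked point stays on the fixed $E$), while $\delta_1\cdot T_2=\deg(-\psi^Y_y)=-(2g-4)$ as the node sweeps $Y$. Hence $\mathcal{BN}\cdot T_2=-(g-1)\,\delta_1\cdot T_2=2(g-1)(g-2)=4\binom{g-1}{2}$ and $\mathcal{W}\cdot T_2=-\binom{g}{2}\,\delta_1\cdot T_2=g(g-1)(g-2)$. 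The core computation is $[\Mbar^r_{g,d}(a)]\cdot T_2$, carried out via Eisenbud--Harris limit linear series: a limit $\mathfrak{g}^r_d$ with vanishing $\geq a$ at $p$ consists of a $Y$-aspect with node-vanishing $\alpha$ at $y$ and an $E$-aspect with node-vanishing $\beta$ at $q$ and vanishing $\geq a$ at $p$, subject to $\alpha_i+\beta_{r-i}\geq d$. Because $Y$ is general (so $\rho_Y\geq -1$) and $(E,p,q)$ is general, while the total adjusted number is $\rho(g,r,d,a)=-1=\rho_Y+\rho_E$, the only contributions have $\rho_Y=-1$ and $\rho_E=0$; any other split would force $\rho_Y\leq -2$, impossible on a general $Y$, or a negative elliptic aspect that does not occur on general $(E,p,q)$. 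A dimension count then shows that the admissible node-vanishing sequences are exactly the $r+1$ sequences $\alpha=(a_0+1-\delta^i_0,\dots,a_r+1-\delta^i_r)$ for $i=0,\dots,r$, which satisfy $\rho(g-1,r,d,\alpha)=-1$ and pair with a complementary $\beta$ giving $\rho_E=0$. Each $i$ contributes, with multiplicity one from the unique unobstructed elliptic aspect, the genus $g-1$ count of pairs $(y,\ell_Y)$ with $a^{\ell_Y}(y)=\alpha$, that is $n_{g-1,r,d,(a_0+1-\delta^i_0,\dots,a_r+1-\delta^i_r)}$. Summing and substituting into $[\Mbar^r_{g,d}(a)]\cdot T_2=\mu\cdot 4\binom{g-1}{2}+\nu\cdot g(g-1)(g-2)$, and using the value of $\nu$ together with $4\binom{g-1}{2}=2(g-1)(g-2)$, the $\nu$-term collapses to $\nu g/2=n_{g,r,d,a}/(2(g^2-1))$ and one recovers the stated formula for $\mu$.

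The main obstacle is the boundary intersection $[\Mbar^r_{g,d}(a)]\cdot T_2$: one must justify that only refined limit linear series with $(\rho_Y,\rho_E)=(-1,0)$ occur, identify precisely the $r+1$ admissible node sequences $\alpha$, and verify that the associated elliptic aspect exists uniquely on the general $2$-pointed $(E,p,q)$ and is smoothable with multiplicity one, so that each summand is exactly a genus $g-1$ pointed Castelnuovo number with no spurious factor. One must also confirm transversality of $T_2$ to $\Mbar^r_{g,d}(a)$ and that the moving node meets no unexpected higher-codimension strata, so that the naive count is the genuine intersection number. This is precisely the limit-linear-series bookkeeping that, as the authors note, is intractable for Theorem~\ref{Thm} but becomes manageable here because all ramification beyond a single elliptic tail is absorbed by the Brill--Noether--general genus $g-1$ component.
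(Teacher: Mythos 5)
Your proposal is correct and follows essentially the same route as the paper: the same two test curves (the marked point sweeping a fixed general genus $g$ curve to get $\nu$, and an elliptic tail carrying the marked point glued at a moving point of a general genus $g-1$ curve to get $\mu$), the same limit-linear-series analysis forcing $\rho_Y=-1$, $\rho_E=0$ with the $r+1$ node-vanishing sequences $(a_0+1-\delta^i_0,\dots,a_r+1-\delta^i_r)$, and the same final algebra. The transversality and uniqueness points you flag are exactly what the paper disposes of by citing Eisenbud--Harris \cite[Lemma 3.4]{MR985853}.
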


\begin{proof}
Let $C$ be a general curve in $\M_{g}$ and consider the curve $\overline{C} =\{[C,y] \}_{y \in C}$ in $\Mbar_{g,1}$ obtained by varying the point $y$ in $C$. The only generator class having non-zero intersection with $\overline{C}$ is $\psi$, and $\overline{C} \cdot \psi = 2g-2$. On the other hand, $\overline{C} \cdot \Mbar^r_{g,d}\left( a \right)$ is equal to the number of pairs $(y,\ell)\in C \times G^r_d(C)$ such that $a^\ell(y)=a$, that is, $n_{g,r,d,a}$.
Hence, we deduce that
\[
\nu = \frac{n_{g,r,d,a} }{(2g-2)\binom{g+1}{2}}.
\]

Furthermore, let $(E,p,q)$ be a two-pointed elliptic curve with $p-q$ not a torsion point in ${\rm Pic}^0(E)$. Consider the curve $\overline{D}$ in $\Mbar_{g,1}$ obtained by identifying the point $q\in E$ with a moving point in a general curve $D$ of genus $g-1$. Then the intersection $\Mbar^r_{g,d}\left(a\right) \cdot \overline{D}$ corresponds to the pairs $(y,\ell)$ where $y$ is a point in $D$ and $\ell=\{\ell_E,\ell_D\}$ is a limit linear series with $a^{\ell_E}(p)=a$. By \cite[Lemma 3.4]{MR985853}, the intersection is everywhere transverse. The only possibility is $\rho(E,p,q)=0$ and $\rho(D,y)=-1$. It follows that $a^{\ell_D}(y)=(a_0+1-\delta^i_0,\dots,a_r+1-\delta^i_r)$, for some $i=0,\dots,r$, and in each case $\ell_E$ is uniquely determined.
Studying the intersection of $\overline{D}$ with the generating classes, we obtain
\[
\sum_{i=0}^r n_{g-1,r,d,(a_0+1-\delta^i_0,\dots,a_r+1-\delta^i_r)}   = \left( \mu (g-1) + \nu \binom{g}{2} \right)(2g-4)
\]
whence we compute $\mu$.
\end{proof}

\begin{ex}
When $r=1$, $d=g-h$, and $a=(0,g-2h)$, we recover the class of the divisor $\Mbar^1_{g,g-h}(a)$ computed by Logan in \cite[Theorem 4.5]{MR1953519}.
\end{ex}

\section{Non-proportionality of Brill-Noether classes of codimension two}
\label{codim2}

In \cite{MR910206} Eisenbud and Harris show that all classes of closures of Brill-Noether divisors in $\Mbar_g$ are proportional. That is, if $\rho(g,r,d)=-1$, then the class of the closure of the locus $\M_{g,d}^r$ of curves with a linear series $\mathfrak{g}^r_d$ is
\[
[\Mbar_{g,d}^r] = c \cdot \mathcal{BN} \in CH^1(\overline{\mathcal{M}}_g),
\]
where the class $\mathcal{BN}$ is in (\ref{BN}), and $c$ is a positive rational number.

\vskip 4pt

If $\rho(g,r,d)=-2$, then the locus $\M_{g,d}^r$ of curves admitting a linear series $\mathfrak{g}^r_d$ is pure of codimension two (\cite{MR985853}). In the case $r=1$, the class of the closure of the Hurwitz-Brill-Noether locus $\M_{2k,k}^1$ has been computed in \cite{MR3109733} using the space of admissible covers.
In this section, we show that classes of Brill-Noether loci of codimension two are generally not proportional in $CH^2(\overline{\mathcal{M}}_g)$.

\vskip 4pt

The first non-trivial case is when $g=10$: in $\M_{10}$ we consider the two Brill-Noether loci $\M_{10,5}^1$ and $\M_{10,8}^2$ of codimension two.
In order to show that the classes of the closures of $\M_{10,5}^1$ and $\M_{10,8}^2$ are not proportional, we show that their restrictions to two test families are not proportional.

For $i=2,3$, let $C_i$ be a general curve of genus $i$, and $C_{g-i}$ a general curve of genus $g-i$. Consider the two-dimensional family $S_i$ of curves obtained by identifying a moving point $x$ in $C_i$ with a moving point $y$ in $C_{g-i}$. The base of this family is $C_i\times C_{g-i}$.

An element $C_i \cup_{x\sim y} C_{g-i}$ of the family $S_i$ is in the closure of $\M_{10,8}^2$ if and only if it admits a limit linear series $\{ \ell_{C_i}, \ell_{C_{g-i}} \}$ of type $\mathfrak{g}^2_8$ such that $\rho(i,2,8,a^{\ell_{C_i}}(x)) =\rho(g-i,2,8,a^{\ell_{C_{g-i}}}(y))=-1$. There are exactly
\begin{eqnarray*}
T_i := \mathop{\sum_{a=(a_0, a_1, a_2)}}_{\rho(i,2,8,a)=-1} n_{i,2,8,a} \cdot n_{g-i,2,8,(d-a_2, d-a_1, d-a_0)}
\end{eqnarray*}
pairs $(x,y)$ in $C_i\times C_{g-i}$ with this property. Moreover, since the family $S_i$ is in the locus of curves of compact type, we known that the intersection is transverse at each point \cite[Lemma 3.4]{MR910206}. Hence, we have
\begin{align*}
S_2 \cdot \left[ \Mbar_{10,8}^2 \right] &= T_2 = 23184, & S_3 \cdot \left[ \Mbar_{10,8}^2 \right] &= T_3 = 48384.
\end{align*}
Similarly, we compute
\begin{align*}
S_2 \cdot \left[ \Mbar_{10,5}^1 \right] &= 2016,   & S_3 \cdot \left[ \Mbar_{10,5}^1 \right] &=  12096.
\end{align*}
Since the restriction of $[\Mbar_{10,8}^2]$ and $[\Mbar_{10,5}^1]$ to the surfaces $S_2$ and $S_3$ are not proportional, we deduce that $[\Mbar_{10,8}^2]$ and $[\Mbar_{10,5}^1]$ are not proportional.

\bibliographystyle{alpha}
\bibliography{Biblio.bib}

\begin{thebibliography}{ACGH85}

\bibitem[ACGH85]{MR770932}
E.~Arbarello, M.~Cornalba, P.~A. Griffiths, and J.~Harris.
\newblock {\em Geometry of algebraic curves. {V}ol. {I}}, volume 267 of {\em
  Grundlehren der Mathematischen Wissenschaften [Fundamental Principles of
  Mathematical Sciences]}.
\newblock Springer-Verlag, New York, 1985.

\bibitem[Cas89]{Castelnuovo}
Guido Castelnuovo.
\newblock Numero delle involuzioni razionali giacenti sopra una curva di dato
  genere.
\newblock {\em Rendiconti R. Accad. Lincei}, 5(4):130--133, 1889.

\bibitem[EH86]{MR846932}
David Eisenbud and Joe Harris.
\newblock Limit linear series: basic theory.
\newblock {\em Invent. Math.}, 85(2):337--371, 1986.

\bibitem[EH87]{MR910206}
David Eisenbud and Joe Harris.
\newblock The {K}odaira dimension of the moduli space of curves of genus {$\geq
  23$}.
\newblock {\em Invent. Math.}, 90(2):359--387, 1987.

\bibitem[EH89]{MR985853}
David Eisenbud and Joe Harris.
\newblock Irreducibility of some families of linear series with
  {B}rill-{N}oether number {$-1$}.
\newblock {\em Ann. Sci. \'Ecole Norm. Sup. (4)}, 22(1):33--53, 1989.

\bibitem[Far13]{MR3053519}
Gavril Farkas.
\newblock Brill-{N}oether with ramification at unassigned points.
\newblock {\em J. Pure Appl. Algebra}, 217(10):1838--1843, 2013.

\bibitem[Ful92]{MR1154177}
William Fulton.
\newblock Flags, {S}chubert polynomials, degeneracy loci, and determinantal
  formulas.
\newblock {\em Duke Math. J.}, 65(3):381--420, 1992.

\bibitem[GH80]{MR563378}
Phillip Griffiths and Joe Harris.
\newblock On the variety of special linear systems on a general algebraic
  curve.
\newblock {\em Duke Math. J.}, 47(1):233--272, 1980.

\bibitem[HM82]{MR664324}
Joe Harris and David Mumford.
\newblock On the {K}odaira dimension of the moduli space of curves.
\newblock {\em Invent. Math.}, 67(1):23--88, 1982.
\newblock With an appendix by William Fulton.

\bibitem[Log03]{MR1953519}
Adam Logan.
\newblock The {K}odaira dimension of moduli spaces of curves with marked
  points.
\newblock {\em Amer. J. Math.}, 125(1):105--138, 2003.

\bibitem[Tar13]{MR3109733}
Nicola Tarasca.
\newblock Brill--{N}oether loci in codimension two.
\newblock {\em Compos. Math.}, 149(9):1535--1568, 2013.

\end{thebibliography}

\end{document}